\newcommand{\R}{\mathbb{R}}
\newcommand{\C}{\mathbb{C}}
\newcommand{\Z}{\mathbb{Z}}
\newcommand{\F}{\mathcal{F}}
\newcommand{\E}{\mathbb{E}}
\renewcommand{\P}{\mathbb{P}}
\newcommand{\norm}[2]{\left\| #2 \right\|_{#1}}
\newcommand{\norma}[2]{\big\| #2 \big\|_{#1}}
\definecolor{darkviolet}{rgb}{0.58,0,0.83} 
\newtheorem{lemma}{Lemma}[section]
\newtheorem{theorem}[lemma]{Theorem}
\newtheorem{corollary}[lemma]{Corollary}
\newtheorem{prop}[lemma]{Proposition}
\theoremstyle{definition}
\newtheorem{rem}[lemma]{Remark}
\numberwithin{equation}{section}
\author{Jos\'e Luis Romero}
\address{Faculty of Mathematics \\
	University of Vienna \\
	Oskar-Morgenstern-Platz 1 \\
	 1090 Vienna, Austria \\and
	Acoustics Research Institute\\ Austrian Academy of Sciences\\Wohllebengasse 12-14, 1040 Vienna, Austria}
\email{jose.luis.romero@univie.ac.at}
\email{michael.speckbacher@univie.ac.at}
\author{Michael Speckbacher}
\thanks{J. L. R. and M. S. gratefully acknowledge support from the Austrian Science Fund (FWF): Y 1199 and J 4254.}
\title[Spectral-norm risk for multi-tapering]{Spectral-norm risk rates for multi-taper estimation of Gaussian processes}
\begin{document}
\begin{abstract}
We consider the estimation of the covariance of a stationary Gaussian process on a multi-dimensional grid from observations taken on a general acquisition domain. We derive spectral-norm risk rates for multi-taper estimators. When applied to one dimensional acquisition intervals, these show that Thomson's classical multi-taper has  optimal risk rates, as they match known benchmarks. We also extend existing lower risk bounds to multi-dimensional grids and conclude that multi-taper estimators associated with certain two-dimensional acquisition domains also have almost optimal risk rates.
\end{abstract}
\maketitle

\section{Introduction}
Let $X=\{X_k : k \in \mathbb{Z}^d\}$ be a stationary, real, zero mean, ergodic Gaussian process on the infinite $d$-dimensional grid $\mathbb{Z}^d$. The stochastics of $X$ are encoded in its covariance matrix
\begin{align}\label{eq_lll}
\Sigma_{n,m} = \mathbb{E} \big\{ X_n \cdot X_m \} = \sigma_{n-m}, \qquad n,m \in \mathbb{Z}^d,
\end{align}
or, equivalently, in its \emph{spectral density}
\begin{align*}
S(\xi) = \sum_{n \in \mathbb{Z}^d} \sigma_n e^{2 \pi i \langle\xi, n\rangle}, \qquad \xi \in \mathbb{R}^d.
\end{align*}
\emph{Single shot spectral estimation} is the task of approximating the covariance matrix $\Sigma$ given one realization of $X$ observed on an  \emph{acquisition domain} $\Omega \subset \mathbb{Z}^d$.

To compare the performance of different estimators, one must specify a suitable class of covariance matrices $\Sigma$ and an error metric for their approximations. In this article we consider processes $X$ whose spectral density $S$ is a twice continuously differentiable function on $\mathbb{R}^d$ and often fix the normalization
\begin{align}\label{eq_1}
\norm{C^2}{S} = \max \left\{
\max_{\xi \in \mathbb{R}^d} |S(\xi)|,
\max_{\xi \in \mathbb{R}^d} |\partial_{\xi_j} S(\xi)|,
\max_{\xi \in \mathbb{R}^d} |\partial_{\xi_j} \partial_{\xi_k} S(\xi)|:
j,k=1,\ldots,d \right\} \leq 1. 
\end{align}
The performance of an estimator $\widehat{S}$ will be measured with respect to the \emph{uniform norm}
\begin{align}\label{eq_a}
\big\|S - \widehat{S}\big\|_\infty=\max_{\xi \in \mathbb{R}^d} |S(\xi) - \widehat{S}(\xi)|,
\end{align}
as this measures the error incurred in approximating the covariance matrix $\Sigma$ in \emph{spectral norm}:
\begin{align*}
\big\|\Sigma - \widehat{\Sigma}\big\|_s := 
\max_{\norm{2}{a} \leq 1} \big\|\Sigma a - \widehat{\Sigma} a\big\|_2=\big\|S - \widehat{S}\big\|_\infty,
\end{align*}
where $\widehat{\Sigma}_{n,m} = \widehat{\sigma}_{n-m}$,
$\widehat{S}(\xi) = \sum_{n \in \mathbb{Z}^d} \widehat{\sigma}_n e^{2 \pi i \langle\xi, n\rangle}$, and $\|a\|_2 = (\sum_{n\in\mathbb{Z}^d} |a_n|^2)^{1/2}$.
\footnote{Indeed, by \eqref{eq_lll}, the covariance matrix $\Sigma$ represents a convolution operator on $\ell^2(\mathbb{Z}^d)$, which is unitarily equivalent via a Fourier expansion to a multiplication operator on $L^2([0,1]^d)$ with symbol $S$, see, e.g., \cite{MR0094840} for more background.}

The most classical setting for single shot spectral estimation concerns dimension $d=1$ and observations taken on a finite interval $\Omega=\{1, \ldots, N\}$. As shown in \cite[Theorem 1]{carezh13}, in this classical setup the \emph{minimax risk} satisfies
\begin{align}\label{eq_3}
\inf_{\widehat{S}} \sup_{S} \mathbb{E} \left\{ \big\|S - \widehat{S}\big\|_\infty^2 \right\}
\asymp \left(\frac{\log N}{N}\right)^{4/5},
\end{align}
where the supremum is taken over all stationary, real, zero mean, Gaussian processes $X$ with spectral density satisfying \eqref{eq_1} (which are necessarily ergodic) and the infimum runs over all estimators $\widehat{S}$ based on $X_1, \ldots, X_N$.\footnote{Here, and throughout, we write $f \lesssim g$ for two functions if there exists a constant $C>0$ such that $f(x) \leq C g(x)$ for all $x$, while $f \asymp g$ means $f \lesssim g$ and $g \lesssim f$.} Moreover, asymptotically optimal estimators approximately achieving the infimum in \eqref{eq_3} are known explicitly  \cite[Theorem 2]{carezh13}.

A first goal of this article is to show that Thomson's multitaper estimator \cite{th82}
\begin{equation}\label{eq:Smt}
\widehat{S}^{\mathrm{mt}}(\xi):=\frac{1}{K}\sum_{k=0}^{K-1}
\left|\sum_{n\in\Omega}X_n \cdot \nu^{(k)}_n(\Omega,W) \cdot e^{-2\pi i\langle\xi, n\rangle}\right|^2,
\end{equation}
where $\nu^{(k)}_n(\Omega,W)$ denote the Slepian sequences (see \cite{sle78} and  below), is  asymptotically optimal for the minimax risk \eqref{eq_3} when the corresponding parameters are chosen adequately. More generally, we investigate multi-taper estimators on general acquisition domains and prove upper and lower minimax risk estimates.

\section{Results}
\subsection{Multi-taper estimators}
While originally introduced to study time series \cite{th82}, we consider the multi-taper estimator for general dimension $d$ and a general acquisition domain $\Omega$ with cardinality $N_\Omega$
\cite{bronez1988spectral, ha17, siwado11}. Given two parameters, $0<W\leq 1$ (bandwidth) and $K \in \mathbb{N}$ (number of tapers), we first define the \emph{Slepian tapers} recursively, as the set of sequences
$\nu^{(0)} (\Omega,W), \ldots, \nu^{(N_\Omega-1)} (\Omega,W)$ on $\mathbb{Z}^d$ that solve
the following \emph{spectral concentration problem} \cite{sle78}:
\begin{equation*}
\mbox{Maximize} \qquad\int_{[-W/2,W/2]^d}\left|\sum_{n\in\Omega}\nu^{(k)}_{n}(\Omega,W)e^{2\pi i \langle n,\xi\rangle}\right|^2d\xi,
\end{equation*}
\begin{align*}
\text{subject to:}\quad (i)&\quad \sum_{n\in\Omega}|\nu^{(k)}_{n}(\Omega,W)|^2=1,\\ (ii)&\quad \text{supp}\big(\nu^{(k)} (\Omega,W)\big)\subseteq \Omega,\\ (iii)&\quad \nu^{(k)} (\Omega,W)\bot\big\{\nu^{(0)} (\Omega,W),\nu^{(1)} (\Omega,W),...,\nu^{(k-1)} (\Omega,W)\big\}.
\end{align*}
Alternatively, the Slepian tapers are the normalized eigenvectors of the truncated $d$-Toeplitz matrix \cite[Chapters 1 and 3]{hola12}
\begin{align*}
\left\{ \begin{array}{ll} W^d \,
\prod_{k=1}^d \frac{\sin(\pi W(n_k-m_k))}{\pi W(n_k-m_k)}
& \mbox{if}~n, m \in \Omega, \\ 0 & \mbox{otherwise} \end{array} \right..
\end{align*}
The multitaper estimator uses the first $K$ Slepian tapers as masks to build the  averaged periodogram given in \eqref{eq:Smt}.
It is standard to let $K,W$ and $N_\Omega$ be related by
\begin{align}\label{eq_rel}
K = \lceil N_\Omega \cdot W^d \rceil
\end{align}
(smallest integer $\geq N_\Omega \cdot W^d$)
though other choices with $K < \lceil N_\Omega \cdot W^d \rceil$ are of practical interest \cite[Figures 203, 341]{pw93}, \cite[Theorem 5]{krd21}; see also Section \ref{sec_cons}. Corollary \ref{cor:main} below gives an appropriate choice of $K$ as a function of $N_\Omega$, with $W$ being implicitly determined by \eqref{eq_rel}.

\subsection{Spectral-norm mean squared risk rates for multi-tapering}
We formulate risk bounds for multi-tapering in terms of the cardinality of the acquisition domain $N_\Omega$, its \emph{digital perimeter}
\begin{align*}
N_{\partial\Omega}=\sum_{k\in\Z^d}\sum_{j=1}^d |\chi_\Omega(k+e_j)-\chi_\Omega(k)|,
\end{align*}
where $\chi_\Omega$ denotes the characteristic (indicator) function of a domain $\Omega$, and its \emph{diameter} $\mathrm{diam}(\Omega) = \max\{ \|k-j\|_2 : k,j \in \Omega \}$. 
The following result is an analogue of \cite[Theorem 4.3]{anro20} applicable to the error metric \eqref{eq_a} instead of a pointwise error estimate.
\begin{theorem}\label{thm:main}
	Let $X$ be a stationary, real, zero mean, Gaussian process on $\mathbb{Z}^d$ with spectral density $S$ satisfying $\|S\|_{C^2 }\leq 1$.
	Let $\Omega\subset \Z^d$ be a finite acquisition domain with $N_\Omega \geq 3$, and consider the multi-taper estimator \eqref{eq:Smt} with bandwidth $W$ and
	$K = \lceil N_\Omega \cdot W^d \rceil$ tapers. Assume further that
	$N_{\partial\Omega}\geq \left(\frac{N_\Omega }{K}\right)^{1-1/d}$. Then
	\begin{align}\label{eq_mse}
	\E\Big\{\|S - \widehat{S}^{\mathrm{mt}}\|_\infty^2\Big\}
 \lesssim 
	\max_{p\in\{1,2\}} \left(\frac{\log (\emph{diam}(\Omega))}{K}\right)^p + \left(\frac{K}{N_\Omega}\right)^{\frac{4}{d}} +\frac{N_{\partial \Omega}^2 }{N_\Omega^{2-\frac{2}{d}}K^{\frac{2}{d}}}\left[1+\log\left(\frac{N_\Omega}{N_{\partial\Omega}}\right)\right]^2.
	\end{align}
\end{theorem}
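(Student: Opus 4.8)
The plan is to perform a bias--variance decomposition of the risk and then upgrade the resulting pointwise bounds to the uniform norm \eqref{eq_a} by chaining. Writing $\mu(\xi):=\E\{\widehat S^{\mathrm{mt}}(\xi)\}$ and using $\|f+g\|_\infty^2\le 2\|f\|_\infty^2+2\|g\|_\infty^2$, I would split
\[
\E\big\{\|S-\widehat S^{\mathrm{mt}}\|_\infty^2\big\}\le 2\,\|S-\mu\|_\infty^2+2\,\E\big\{\|\widehat S^{\mathrm{mt}}-\mu\|_\infty^2\big\}.
\]
The deterministic \emph{bias} $\|S-\mu\|_\infty^2$ should produce the second and third summands of \eqref{eq_mse}, while the stochastic \emph{fluctuation} should produce the first.

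For the bias, inserting $\sigma_{n-m}=\int_{\T^d}S(\zeta)e^{2\pi i\langle\zeta,n-m\rangle}\,d\zeta$ and using the normalization (i) shows that $\mu=S*\Phi$ on $\T^d$, where $\Phi(\eta)=\tfrac1K\sum_{k=0}^{K-1}|H_k(\eta)|^2$ with $H_k(\eta)=\sum_{n\in\Omega}\nu^{(k)}_n(\Omega,W)e^{-2\pi i\langle\eta,n\rangle}$ is a probability density. I would then estimate
\[
|S(\xi)-\mu(\xi)|\le\int_{[-W/2,W/2]^d}|S(\xi)-S(\xi-\eta)|\,\Phi(\eta)\,d\eta+2\int_{\T^d\setminus[-W/2,W/2]^d}\Phi(\eta)\,d\eta.
\]
On the band, a second-order Taylor expansion together with $\|S\|_{C^2}\le 1$ and the evenness of $\Phi$ (the Slepian tapers may be taken real, killing the first-order term) gives a contribution $\lesssim W^2\asymp(K/N_\Omega)^{2/d}$. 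The leakage equals $\tfrac1K\sum_{k=0}^{K-1}(1-\lambda_k)$, where $\lambda_k=\int_{[-W/2,W/2]^d}|H_k|^2$ are the Slepian eigenvalues; invoking a Landau--Widom plunge estimate that controls the transition eigenvalues by the digital perimeter, namely $\sum_{k<K}(1-\lambda_k)\lesssim N_{\partial\Omega}\,(K/N_\Omega)^{1-1/d}\,[1+\log(N_\Omega/N_{\partial\Omega})]$, and squaring, reproduces exactly the third summand of \eqref{eq_mse}.

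The fluctuation is the crux. With $w_k(\xi)=\big(\nu^{(k)}_n(\Omega,W)e^{-2\pi i\langle\xi,n\rangle}\big)_{n\in\Omega}$ and $Y=(X_n)_{n\in\Omega}$, the estimator is the quadratic form $\widehat S^{\mathrm{mt}}(\xi)=Y^{*}M_\xi Y$ with $M_\xi=\tfrac1K\sum_{k=0}^{K-1}w_k(\xi)w_k(\xi)^{*}$, so that $\widehat S^{\mathrm{mt}}(\xi)-\mu(\xi)$ is a centered second-order Gaussian chaos. Since (i) makes $w_0(\xi),\dots,w_{K-1}(\xi)$ orthonormal, $M_\xi$ is $1/K$ times a rank-$K$ projection, whence $\|M_\xi\|=1/K$ and $\|M_\xi\|_{\mathrm{HS}}=1/\sqrt K$; combined with $\|\Sigma_\Omega\|\le\|S\|_\infty\le 1$ for the covariance $\Sigma_\Omega=(\sigma_{n-m})_{n,m\in\Omega}$, the Hanson--Wright inequality yields the pointwise tail $\P(|\widehat S^{\mathrm{mt}}(\xi)-\mu(\xi)|>t)\lesssim\exp(-cK\min(t^2,t))$. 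To pass to the supremum I would use that $\xi\mapsto\widehat S^{\mathrm{mt}}(\xi)$ is a trigonometric polynomial with frequencies of size $\lesssim\mathrm{diam}(\Omega)$: a Bernstein-type gradient bound lets me replace $\T^d$ by a net of cardinality $M\asymp\mathrm{diam}(\Omega)^{d}$, and a union bound followed by integration of the tail gives
\[
\E\big\{\|\widehat S^{\mathrm{mt}}-\mu\|_\infty^2\big\}\lesssim\frac{\log M}{K}+\Big(\frac{\log M}{K}\Big)^2\asymp\max_{p\in\{1,2\}}\Big(\frac{\log(\mathrm{diam}(\Omega))}{K}\Big)^p,
\]
which is the first summand of \eqref{eq_mse}; the two regimes are precisely the sub-Gaussian and sub-exponential ranges of the chaos tail.

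The main obstacle I expect is the uniform control in this last step: both establishing the Bernstein-type regularity of the random polynomial $\widehat S^{\mathrm{mt}}-\mu$ (so the net error is negligible) and tracking the two tail regimes so that the maximum over $p\in\{1,2\}$ is sharp. A secondary technical point is the Landau--Widom plunge estimate on a general digital domain $\Omega$ with the stated logarithmic dependence on $N_\Omega/N_{\partial\Omega}$; the hypothesis $N_{\partial\Omega}\ge(N_\Omega/K)^{1-1/d}$ guarantees that this perimeter contribution is the genuinely dominant boundary effect, so that the three error mechanisms balance as in \eqref{eq_mse}.
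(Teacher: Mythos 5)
Your proposal is correct and follows essentially the same route as the paper: a bias--variance split, with the fluctuation term handled by writing $\widehat S^{\mathrm{mt}}(\xi)$ as a Gaussian quadratic form whose matrix is $1/K$ times a rank-$K$ projection (so $\|M_\xi\|=1/K$, $\|M_\xi\|_{\mathrm{HS}}=1/\sqrt K$), applying Hanson--Wright, discretizing the trigonometric polynomial to a grid of size $\asymp\mathrm{diam}(\Omega)^d$, and integrating the two tail regimes to get the $\max_{p\in\{1,2\}}$ term. The only difference is that the paper imports the bias bound (your spectral-window Taylor term plus the eigenvalue-plunge leakage term) as a black box from the cited reference \cite{anro20} rather than re-deriving it as you sketch.
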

We now apply Theorem \ref{thm:main} to a class of acquisition domains
satisfying perimeter bounds similar to those enjoyed by rectangles and disks. This
regime arises, for example, when discretizing an analog domain at increasingly finer scales.
\begin{corollary}\label{cor:main}
	Under the assumptions of Theorem~\ref{thm:main}, 
	suppose that $C>0$ is a constant such that
	\begin{align}\label{eq_cc}
	N_{\partial\Omega} \leq C N_\Omega^{1-1/d},
	\end{align}
	and that  
\begin{equation}\label{eq:cond-diam} 	
 {\emph{diam}(\Omega)\leq \emph{exp}\big( {N_\Omega^{ 1/d}}\big)}.
\end{equation}
	Then, for $d=1$, the choice $K=\left\lceil\big(\log  (\emph{diam}(\Omega))\cdot  N_\Omega^{4}\big)^{1/5} \right\rceil$ yields the upper bound
	\begin{equation}\label{eq_aa}
	\E\left\{\norma{\infty}{S - \widehat{S}^{\mathrm{mt}}}^2\right\}
	\lesssim  \left(\frac{\log(\emph{diam}(\Omega))}{N_\Omega} \right)^{4/5},
	\end{equation}
	while, for $d\geq 2$,  the following bound holds for $K=\left\lceil \big( \log (\emph{diam}(\Omega))^d\cdot N_\Omega^2\big)^{1/3}\right\rceil$:
	\begin{equation}\label{eq_bb}
\E\left\{\norma{\infty}{S - \widehat{S}^{\mathrm{mt}}}^2\right\}\lesssim  \left(\frac{\log (\emph{diam}(\Omega))}{N_\Omega^{1/d}} \right)^{4/3}.
	\end{equation}
	The implied constants in \eqref{eq_aa} and \eqref{eq_bb} depend on the constant $C$ in \eqref{eq_cc}.
\end{corollary}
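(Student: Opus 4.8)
The plan is to obtain Corollary~\ref{cor:main} directly from Theorem~\ref{thm:main} by substituting the prescribed number of tapers $K$ (together with the matching bandwidth $W=(K/N_\Omega)^{1/d}$, so that $\lceil N_\Omega W^d\rceil=K$; note $W\le 1$ since \eqref{eq:cond-diam} gives $K\le N_\Omega$) into the bound \eqref{eq_mse} and balancing its three summands. Throughout I write $L:=\log(\mathrm{diam}(\Omega))$ and assume $L\ge 1$; the complementary case $\mathrm{diam}(\Omega)<e$ forces $N_\Omega$ to be bounded by a dimensional constant, where the claim is trivial.

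The first step is to tame the perimeter term. Using \eqref{eq_cc} we have $N_{\partial\Omega}^2\le C^2 N_\Omega^{2-2/d}$, so the prefactor of the third summand of \eqref{eq_mse} is $\lesssim_C K^{-2/d}$. Since $\Omega$ is contained in a box of side $\mathrm{diam}(\Omega)+1$, we get $N_\Omega\le(\mathrm{diam}(\Omega)+1)^d$ and hence $\log N_\Omega\lesssim_d L$; combined with $N_{\partial\Omega}\ge 1$ this yields $[1+\log(N_\Omega/N_{\partial\Omega})]^2\lesssim_d L^2$. Therefore the third summand is $\lesssim L^2 K^{-2/d}$. Moreover \eqref{eq:cond-diam} reads $L\le N_\Omega^{1/d}$, and one checks that for either choice of $K$ this forces $K\ge L$, so $L/K\le 1$ and the first summand of \eqref{eq_mse} equals $L/K$.

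It remains to insert $K$ and read off the rate. For $d=1$, the choice $K\asymp(L\,N_\Omega^4)^{1/5}$ is exactly the one balancing the leakage term $L/K$ against the variance term $(K/N_\Omega)^4$, both of which become $\asymp(L/N_\Omega)^{4/5}$; the perimeter term is then $\lesssim L^2/K^2\asymp(L/N_\Omega)^{8/5}\le(L/N_\Omega)^{4/5}$ using $L\le N_\Omega$, hence of lower order. This proves \eqref{eq_aa}. For $d\ge 2$, the choice $K\asymp(L^d N_\Omega^2)^{1/3}$ instead balances the variance term $(K/N_\Omega)^{4/d}$ against the perimeter term $L^2 K^{-2/d}$, both of which become $\asymp(L/N_\Omega^{1/d})^{4/3}$, while the leakage term $L/K\asymp L^{1-d/3}N_\Omega^{-2/3}$ is dominated by $(L/N_\Omega^{1/d})^{4/3}$ because $L\ge1$ and $d\ge 2$. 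This gives \eqref{eq_bb}.

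The only genuine subtlety is recognizing which pair of summands the prescribed $K$ is designed to balance: in dimension one it is the leakage and variance terms, but for $d\ge 2$ the perimeter term decays only like $K^{-2/d}$ and overtakes the leakage term $K^{-1}$, so it is the perimeter and variance terms that must be equated. Keeping the exponent bookkeeping straight, and verifying via $L\le N_\Omega^{1/d}$ and $L\ge1$ that the remaining, unbalanced summand is of strictly lower order, is the crux; the rest is substitution.
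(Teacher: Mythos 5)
Your proposal is correct and follows essentially the same route as the paper: substitute the prescribed $K$ into \eqref{eq_mse}, use \eqref{eq_cc} together with $N_\Omega\lesssim \mathrm{diam}(\Omega)^d$ to reduce the right-hand side to $\log(\mathrm{diam}(\Omega))/K+\log(\mathrm{diam}(\Omega))^2K^{-2/d}+(K/N_\Omega)^{4/d}$, and observe that the prescribed $K$ balances the leakage and variance terms for $d=1$ and the perimeter and variance terms for $d\ge 2$, with the remaining summand dominated thanks to \eqref{eq:cond-diam}. Your explicit checks that $K\ge\log(\mathrm{diam}(\Omega))$ and $K\le N_\Omega$ correspond to the paper's provisional assumption $\log(\mathrm{diam}(\Omega))\le K$ and its remark on the maximal number of orthogonal tapers, so the two arguments differ only in bookkeeping.
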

\begin{rem}[\emph{Risk optimality of Thomson's multi-taper}]
Applying Corollary \ref{cor:main} to a one-dimensional interval $\Omega=\{1, \ldots, N\}$, we see that Thomson's classical multi-taper estimator \cite{th82} achieves the minimax error rate \eqref{eq_3}.
\end{rem}

\begin{rem}
In Theorem \ref{thm:main} and Corollary \ref{cor:main} the number of tapers $K$ and the bandwidth parameter $W$ are linked by \eqref{eq_rel}. In terms of $W$, \eqref{eq_mse} reads
$$
	\E\Big\{\|S - \widehat{S}^{\mathrm{mt}}\|_\infty^2\Big\} \lesssim 
	\max_{p\in\{1,2\}}{\left(\frac{\log (\text{diam}(\Omega))}{N_\Omega W^d}\right)^p}+W^4+\frac{N_{\partial \Omega}^2 }{N_\Omega^{2}W^2}\left[1+\log\left(\frac{N_\Omega}{N_{\partial\Omega}}\right)\right]^2 ,
	$$
while the error rates \eqref{eq_aa} and \eqref{eq_bb} hold when
\[W=\left\{\begin{array}{rl}
N_\Omega^{-1/5}\ \big[\log (\text{diam}(\Omega))\big]^{1/5}, & d=1
\\
N_\Omega^{- {1}/{3d}}\big[\log (\text{diam}(\Omega))\big]^{1/3},& d\geq 2
\end{array}\right..\]
\end{rem}

\subsection{Lower risk bounds for general acquisition domains}
We are unaware of precise minimax rates for the covariance estimation problem with general acquisition domains. As a first benchmark, we derive the following bound. 
\begin{theorem}\label{thm:lower-minimax}
	Let $\Omega\subset\Z^d$ be finite with $N_\Omega \geq 3$. Then
	\begin{equation}
	\inf_{\widehat{S}}\sup_{S}
		\E\left\{\big\|S - \widehat{S} \big\|_\infty^2\right\} \gtrsim \left(\frac{ \log N_\Omega  }{\emph{diam}(\Omega)^d}\right)^{\frac{4}{4+d}},
	\end{equation}
	where the supremum is taken over all sationary, real, zero mean, Gaussian processes on $\mathbb{Z}^d$ with spectral density satisfying $\norm{C^2}{S} \leq 1$, and the infimum runs over all estimators $\widehat{S}$ based on an observation of $X$ on $\Omega$. In particular, if $\mathrm{diam}(\Omega) \leq  C N_\Omega^{1/d}$, then
	\begin{equation}\label{eq_ccc}
	\inf_{\widehat{S}}\sup_{S}\E\left\{\big\|S-\widehat S\big\|_\infty^2\right\}\gtrsim \left(\frac{\log N_\Omega }{N_\Omega}\right)^{\frac{4}{4+d}},
	\end{equation}
	where the implied constant depends on $C$.
\end{theorem}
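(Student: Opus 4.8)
The plan is to prove the bound by the usual reduction to multiple hypothesis testing via Fano's inequality, with a construction tailored to the sup-norm loss. First I would fix a smooth even bump $\psi$ supported in a ball of radius $1/2$ with $\psi(0)=1$, and, for a bandwidth $0<w\leq 1$ to be optimized, tile the frequency torus $\T^d$ by $M\asymp w^{-d}$ disjoint translates; to respect the symmetry $S(\xi)=S(-\xi)$ forced by realness I would group these into symmetric pairs $\{\xi_j,-\xi_j\}$. The candidate spectral densities are $S_j=\tfrac12+a\,\phi_j$, where $\phi_j(\xi)=\psi((\xi-\xi_j)/w)+\psi((\xi+\xi_j)/w)$ and the amplitude is $a\asymp w^2$. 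The constant base $\tfrac12$ guarantees $S_j>0$, and since the first and second derivatives of $\phi_j$ scale like $w^{-1}$ and $w^{-2}$, the choice $a\asymp w^2$ makes $\norm{C^2}{S_j}\leq 1$; continuity of $S_j$ makes each process ergodic. Because the bumps have disjoint supports, $\norm{\infty}{S_i-S_j}=a$ for $i\neq j$, so the hypotheses are $2s$-separated in the loss with $s\asymp w^2$.

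Next I would bound the information between the hypotheses. Writing $P_j$ for the law of the Gaussian observation $X|_\Omega$ under $S_j$ and $P_0$ for the law under the flat density $\tfrac12$, the covariance of $P_0$ is $\Sigma_0=\tfrac12 I_{N_\Omega}$, so $\Sigma_0^{-1}$ is bounded and the operator norm of the truncated-Toeplitz perturbation $\Delta_j=\big[a\,\widehat{\phi_j}(n-m)\big]_{n,m\in\Omega}$ is at most $\norm{\infty}{a\phi_j}=a$. A second-order expansion of the Gaussian relative entropy $D(P_j\|P_0)=\tfrac12[\mathrm{tr}(\Sigma_0^{-1}\Sigma_j)-N_\Omega-\log\det(\Sigma_0^{-1}\Sigma_j)]$, legitimate since $\|\Delta_j\|_{\mathrm{op}}\leq a$ is small, then gives $D(P_j\|P_0)\lesssim\|\Delta_j\|_F^2$. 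The key estimate is the Frobenius bound
\begin{align*}
\|\Delta_j\|_F^2=a^2\sum_{n,m\in\Omega}|\widehat{\phi_j}(n-m)|^2\leq a^2 N_\Omega\sum_{k\in\Z^d}|\widehat{\phi_j}(k)|^2=a^2 N_\Omega\norm{L^2(\T^d)}{\phi_j}^2\lesssim a^2 N_\Omega w^d,
\end{align*}
obtained by extending the inner sum to all $m\in\Z^d$ and applying Parseval. Using $N_\Omega\leq(\mathrm{diam}(\Omega)+1)^d\lesssim\mathrm{diam}(\Omega)^d$ and $a\asymp w^2$ yields $D(P_j\|P_0)\lesssim\mathrm{diam}(\Omega)^d\,w^{4+d}$.

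With these ingredients I would optimize. Choosing $w$ so that $\mathrm{diam}(\Omega)^d\,w^{4+d}\asymp\log N_\Omega$, with a small implied constant, forces $\tfrac1M\sum_j D(P_j\|P_0)\lesssim\log N_\Omega$; meanwhile $N_\Omega\leq\mathrm{diam}(\Omega)^d$ shows $\log M=\log(w^{-d})\gtrsim\log N_\Omega$, so the Fano condition $\tfrac1M\sum_j D(P_j\|\bar P)\leq\tfrac1M\sum_j D(P_j\|P_0)\leq\alpha\log M$ holds for a suitably small $\alpha$ once $N_\Omega$ is large, the finitely many small values being absorbed into the implied constants. Fano's inequality then gives $\inf_{\widehat S}\sup_S\E\{\norm{\infty}{S-\widehat S}^2\}\gtrsim s^2\asymp w^4\asymp(\log N_\Omega/\mathrm{diam}(\Omega)^d)^{4/(4+d)}$, which is the claimed bound; the displayed particular case follows at once from $\mathrm{diam}(\Omega)^d\lesssim N_\Omega$.

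I expect the main obstacle to be the information bound for the correlated Gaussian observations: controlling the remainder in the log-determinant expansion uniformly, which is what the operator-norm smallness of $\Delta_j$ is for, and, more delicately, the bookkeeping of the logarithmic factors. The natural quantity coming out of the packing is $\log M\asymp\log(\mathrm{diam}(\Omega)^d/\log N_\Omega)$, and the point is to lower bound it cleanly by $\log N_\Omega$ through $N_\Omega\leq\mathrm{diam}(\Omega)^d$, so that the sup-norm logarithmic penalty appears in the stated form rather than as $\log\mathrm{diam}(\Omega)$.
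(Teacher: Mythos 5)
Your proof is correct, and while it shares the overall architecture with the paper's argument (Fano's lemma applied to a family of $M\asymp w^{-d}$ disjointly supported, symmetrized bumps of width $w$ and amplitude $\asymp w^2$ riding on the flat density $1/2$), it handles the key information-theoretic step in a genuinely different way. The paper never expands the log-determinant of the truncated Toeplitz covariance: instead it passes to a \emph{more informative} experiment by embedding $\Omega$ in a cube of side $\mathrm{diam}(\Omega)$ and extending to a circulant covariance of size $(2\omega+1)^d$, which the DFT diagonalizes exactly; the observations become independent Gaussians with variances $F_\omega(S)(k/(2\omega+1))$, and the Kullback divergence reduces via the same elementary inequality $x-\log(1+x)\le x^2$ and discrete Parseval to $(2\omega+1)^d\|S_n-S_0\|_2^2$. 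You instead stay with the law of $X_{|\Omega}$ itself, use positivity and the operator-norm bound $\|\Delta_j\|\le\|a\phi_j\|_\infty$ to justify the second-order expansion of $\mathrm{tr}-\log\det$, and control $\|\Delta_j\|_F^2\le a^2N_\Omega\|\phi_j\|_{L^2}^2$ by extending one index sum to $\Z^d$. The trade-off is instructive: the circulant route buys an exact product structure with no remainder to control, but inherently pays the price of the bounding cube, which is why $\mathrm{diam}(\Omega)^d$ appears in the theorem; your direct route yields $K(\mathbb{P}_j,\mathbb{P}_0)\lesssim N_\Omega w^{4+d}$ before you deliberately weaken $N_\Omega\lesssim\mathrm{diam}(\Omega)^d$, so (after rechecking that $\log M\asymp\log N_\Omega$ for the corresponding choice of $w$) it actually proves the stronger conclusion $\inf_{\widehat S}\sup_S\E\{\|S-\widehat S\|_\infty^2\}\gtrsim(\log N_\Omega/N_\Omega)^{4/(4+d)}$ for \emph{every} finite $\Omega$, without the diameter hypothesis needed in \eqref{eq_ccc}. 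The only points to write out carefully are the ones you already flag: the eigenvalues of $\Sigma_0^{-1}\Sigma_j-I$ lie in $[0,2a]\subset[0,1/2]$ so the quadratic bound on $x-\log(1+x)$ applies, and the small-$N_\Omega$ cases where $M$ or $\log M$ degenerate must be handled by adjusting constants (or a two-point argument), exactly as the paper does by forcing $M\ge2$.
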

\begin{rem}[\emph{Almost risk optimality of multi-tapering for certain two dimensional domains}]
For classes of two dimensional acquisition domains $\Omega$ satisfying
$\mathrm{diam}(\Omega) \leq C  N_\Omega^{1/2}$, the upper bound for the multi-taper estimator \eqref{eq_bb} deviates from the minimax risk bound \eqref{eq_ccc} only by a factor $(\log N_\Omega)^{2/3}$.
\end{rem}

\subsection{Proofs}
The upper bounds are obtained by combining the bias bounds from \cite{abro17,anro20} with methods of concentration of measure, as used in \cite{carezh13, kara17}. More specifically,
\cite{abro17} and \cite{anro20} bound the mean squared error $\E\big\{|S(\xi) - \widehat{S}^{\mathrm{mt}}(\xi)|^2\big\}$ corresponding to the estimation of individual spectral frequencies $\xi$. Specifically, under the assumptions of Theorem \ref{thm:main}, \cite[Theorem 4.3]{anro20} gives
\begin{align}\label{eq_xxx}
	\sup_{\xi \in \mathbb{R}^d} \E\Big\{ |S(\xi) - \widehat{S}^{\mathrm{mt}}(\xi) |^2 \Big\}
	\lesssim 
	\frac{1}{K}  + \left(\frac{K}{N_\Omega}\right)^{\frac{4}{d}} +\frac{N_{\partial \Omega}^2 }{N_\Omega^{2-\frac{2}{d}}K^{\frac{2}{d}}}\left[1+\log\left(\frac{N_\Omega}{N_{\partial\Omega}}\right)\right]^2.
\end{align}
 Those estimates are here extended to the stronger error metric $\E\big\{\norma{\infty}{S - \widehat{S}^{\mathrm{mt}}}^2\big\}$, by combining them with concentration estimates for covariance estimators. Comparing \eqref{eq_mse} to \eqref{eq_xxx}, we see that the error bound for the uniform norm has an additional logarithmic factor.
 
 The proofs of the risk bounds for the multi-taper estimator require reinspecting and adapting a portion of the arguments in \cite{carezh13, kara17}. Following this path, it would also be possible to derive deviation bounds as in \cite[Theorem 3]{kara17} (which are stronger than MSE bounds); see also \cite[Theorem 5]{krd21}.
The lower (minimax risk) bounds are obtained by direct adaptation of \cite{carezh13}. Detailed proofs are provided below.

\section{Conclusions}\label{sec_cons}
Corollary \ref{cor:main} shows that Thomson's multi-taper has optimal
risk rates for covariance estimation of stationary Gaussian processes with $C^2$ spectral densities when the bandwidth parameter $W$, the number of samples $N$, and the number of tapers $K$ are linked by $K \asymp N^{4/5} \log(N)^{1/5}$ and $K=\lceil N_\Omega W\rceil$. (Note that in many references the bandwidth interval is $[-W,W]$, so that the previous equation reads $K=\lceil 2 N_\Omega W\rceil$.) However, in certain situations, notably when the spectral density exhibits a high dynamic range, practitioners choose $K< N_\Omega W$. Comments to such effect can be found in \cite{pw93} and are illustrated in \cite[Figures 203, 341]{pw93}. The recent article \cite{krd21} provides analytic and numerical evidence for the superiority of certain choices $K <  N_\Omega W$ in various setups, including non-smooth spectral densities, or moderate values of $N_\Omega$, when the implied constants in \eqref{eq_a} cannot be neglected; see \cite[Theorems 2 and 5]{krd21}. We thus contribute to the discussion on the optimal number of tapers in Thomson's method by showing that the choice $K=\lceil  N_\Omega W\rceil$ meets the benchmark derived in \cite{carezh13}, and hope to motivate the investigation of more adequate formal benchmarks to reflect the possible practical advantages of other choices of $K$.

Multi-taper estimators for general acquisition domains are relevant in many areas of applications including geophysics, see, e.g., \cite{hasi12, simons2003spatiospectral,siwado11}, and are comparatively less explored than Thomson's original estimator. For general acquisition geometries, we have shown that the estimates for single frequencies in \cite{abro17} and \cite{anro20} extend to spectral norm estimates with a logarithmic gain in the corresponding bounds. In dimension $d=1,2$, Theorem \ref{thm:lower-minimax} shows that such bounds are almost optimal for classes of domains whose diameter is suitable dominated. We do not know the precise mini-max rate for covariance estimation in the spectral norm outside those regimes.

\section{Detailed proofs}
The proofs in this section follow closely \cite{carezh13, kara17}, and in some cases provide more details and simplified or optimized arguments.

\subsection{Proof of upper bounds}
Consider  a stationary Gaussian process  $X$ on $\mathbb{Z}^d$.
The multi-taper estimator \eqref{eq:Smt} can be rewritten as
\begin{align}\label{eq_bbb}
\widehat{S}^{\mathrm{mt}}(\xi)=\sum_{\ell\in (\Omega-\Omega)}e^{-2\pi i\langle\xi, \ell\rangle}\left(\sum_{\substack{n,m\in\Omega\\ n-m=\ell}} X_n X_{m}\frac{1}{K}\sum_{k=0}^{K-1}\nu^{(k)}_n(\Omega,W) \nu^{(k)}_{m}(\Omega,W)\right),
\end{align}
where $A-B=\{a-b:\ a\in A,\ b\in B\}$.
This shows that $\widehat{S}^{\mathrm{mt}}$ (and consequently also $\mathbb{E}\{\widehat{S}^{\mathrm{mt}}\}$) is a multivariate trigonometric polynomial of maximum component degree  $\omega:=\lceil\text{diam}(\Omega)\rceil$. 
We will use the following sampling inequality: for a trigonometric polynomial $p(\xi) = \sum_{l \in \{-n,\ldots,n\}^d} c_l e^{-2\pi i\langle\xi, \ell\rangle}$, we have $\sup_{\xi \in [0,1]^d} |p(\xi)| \leq C_d \max_{ \ell\in\{0,...,4n-1\}^d} |p(\tfrac{\ell}{4n})|$, where $C_d$ is a constant that depends on $d$. 
The one dimensional version of the sampling inequality is classical (see, e.g., \cite[Chapter 5.2]{MR0094840}) while the general case follows by applying the one dimensional result to each variable iteratively; alternatively, the multi-dimensional result can deduced from \cite[Theorem~1]{pfbr18}.

For $\ell\in \{1,\ldots,4\omega\}^d$ we set $(\xi_\ell)_k:=\frac{\ell_k-1}{4\omega}$. Applied to \eqref{eq_bbb}, the sampling inequality yields:
\begin{align}\label{eq:trig-est}
\|\widehat{S}^{\mathrm{mt}}-\mathbb{E}\{\widehat{S}^{\mathrm{mt}}\}\|_\infty&=\sup_{\xi\in [0,1]^d}|\widehat{S}^{\mathrm{mt}}(\xi)-\mathbb{E}\{\widehat{S}^{\mathrm{mt}}\}(\xi)|  \nonumber
\\
&\leq C_d \max_{ \ell\in\{1,...,4\omega\}^d}\left|\widehat{S}^{\mathrm{mt}}\left(\xi_\ell\right)-\mathbb{E}\{\widehat{S}^{\mathrm{mt}}\}\left(\xi_\ell\right)\right|,
\end{align}
where the first identity follows from the $\mathbb{Z}^d$-periodicity of $\widehat{S}^{\mathrm{mt}}$.

We now express the multi-taper as done in \cite{liro08}.
In terms of the matrix-valued function $V_K:[0,1]^d\rightarrow \C^{\Omega\times \Omega}$,
\begin{align}\label{eq_V}
\big(V_K(\xi)\big)_{n,m}:=e^{-2\pi i\langle \xi, n-m\rangle}\frac{1}{K}\sum_{k=0}^{K-1}\nu^{(k)}_n(\Omega,W)\cdot \nu^{(k)}_m(\Omega,W), \qquad n,m \in \Omega,
\end{align}
the multi-taper estimator can be written as a quadratic operation on the restricted process $X_{|\Omega}$:
\begin{align*}
\widehat{S}^{\mathrm{mt}}(\xi)=\langle V_K(\xi)X_{|\Omega},X_{|\Omega}\rangle.
\end{align*}
(We note that this formula does not correspond to ``tapering'' as considered in \cite{kara17}.)
At grid points, the deviation of the multi-taper estimator from its mean is therefore
\begin{align}\label{eq_dd}
Z_K^\ell:=\widehat{S}^{\mathrm{mt}}\left(\xi_\ell\right)-\mathbb{E}\{\widehat{S}^{\mathrm{mt}}\}\left(\xi_\ell\right)=\langle V_K(\xi_\ell){X_{|\Omega}},{X_{|\Omega}}\rangle-\mathbb{E}\langle V_K(\xi_\ell){X_{|\Omega}},{X_{|\Omega}}\rangle.
\end{align}
As in \cite{kara17}, the main tool to analyze \eqref{eq_dd} is the following generalization of the Hanson-Wright inequality, see \cite{abra15,ruve13}.
\begin{theorem}\label{thm:hansonwright}
	Let $Y\sim \mathcal{N}(0,\Sigma)$, then for every $A\in\R^{N\times N}$ and every $t>0$:
\begin{equation}\label{eq:hw}
	\P\big(|\langle AY,Y\rangle -\E\{\langle AY,Y\rangle\} |\geq t\big)\leq 2\ \emph{exp}\left[-\frac{1}{C}\min\left(\frac{t^2}{\|\Sigma\|_s^2\|A\|_F^2},\frac{t}{\|\Sigma\|_s\|A\|_s}\right)\right]
\end{equation}
	for some universal constant $C>0$.
\end{theorem}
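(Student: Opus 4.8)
The plan is to reduce \eqref{eq:hw} to the classical Hanson--Wright inequality for a \emph{standard} Gaussian vector by a whitening change of variables, transferring the correlations encoded in $\Sigma$ onto the matrix and then absorbing them into the norm bounds. Since $\Sigma\geq 0$, it admits a positive semidefinite square root $\Sigma^{1/2}$, and writing $Y=\Sigma^{1/2}g$ with $g\sim\mathcal{N}(0,I_N)$ reproduces the law $\mathcal{N}(0,\Sigma)$. Setting $B:=\Sigma^{1/2}A\Sigma^{1/2}$, the quadratic form becomes
\begin{equation*}
\langle AY,Y\rangle=\langle \Sigma^{1/2}A\Sigma^{1/2}g,g\rangle=\langle Bg,g\rangle,
\end{equation*}
and this identity persists after centering, so the event in \eqref{eq:hw} is exactly $\{|\langle Bg,g\rangle-\E\langle Bg,g\rangle|\geq t\}$.

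Next I would invoke the standard Hanson--Wright inequality \cite{ruve13} for the vector $g$, whose coordinates are independent with a universal sub-Gaussian norm. This gives a universal constant $C>0$ with
\begin{equation*}
\P\big(|\langle Bg,g\rangle-\E\langle Bg,g\rangle|\geq t\big)\leq 2\,\exp\left[-\frac{1}{C}\min\left(\frac{t^2}{\|B\|_F^2},\frac{t}{\|B\|}\right)\right].
\end{equation*}
(If one prefers the symmetric formulation of Hanson--Wright, one first replaces $A$ by $\tfrac12(A+A^\top)$, which leaves the quadratic form unchanged and only decreases both norms.) It then remains to control the norms of $B$. Using $\|\Sigma^{1/2}\|^2=\|\Sigma\|$ together with operator-norm submultiplicativity and the mixed bounds $\|CD\|_F\leq\|C\|\,\|D\|_F$ and $\|CD\|_F\leq\|C\|_F\,\|D\|$, I obtain
\begin{equation*}
\|B\|\leq\|\Sigma\|\,\|A\|,\qquad \|B\|_F\leq\|\Sigma\|\,\|A\|_F.
\end{equation*}
Each inequality enlarges the corresponding ratio inside the minimum, hence enlarges the minimum and makes the exponential factor no larger than the one claimed; substituting then yields \eqref{eq:hw}.

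The only point requiring genuine care is the Frobenius estimate $\|\Sigma^{1/2}A\Sigma^{1/2}\|_F\leq\|\Sigma\|\,\|A\|_F$: it is the \emph{mixed} submultiplicativity (operator norm against Frobenius norm), not pure Frobenius submultiplicativity, that produces the clean factor $\|\Sigma\|$ and makes the final bound scale in $\|\Sigma\|$ rather than in $\|\Sigma^{1/2}\|_F^2$. Everything else is a direct translation through the change of variables, and one may equivalently cite the Gaussian chaos version in \cite{abra15} to obtain the statement in one step.
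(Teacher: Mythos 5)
Your proposal is correct. The paper itself gives no proof of this statement---it simply cites \cite{abra15,ruve13}---so there is nothing to compare against beyond noting that your whitening reduction ($Y=\Sigma^{1/2}g$, $B=\Sigma^{1/2}A\Sigma^{1/2}$, followed by the classical Hanson--Wright inequality for $g\sim\mathcal{N}(0,I_N)$ and the norm bounds $\|B\|\leq\|\Sigma\|\|A\|$, $\|B\|_F\leq\|\Sigma\|\|A\|_F$ via mixed submultiplicativity) is exactly the standard derivation underlying the cited references, and every step of it is sound.
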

(Here, $\|A\|_F$ is the Hilbert-Schmidt (Frobenius) norm of $A$, while $\|A\|_s = \sup_{\|x\|_2 \leq 1} \|Ax\|_2$ is the spectral norm.)
To apply \eqref{eq_dd} we first carry out the following calculation.
\begin{lemma}\label{lem:HS}
	The Frobenius norm of   $V_K(\xi_\ell)$ equals $1/\sqrt{K}$ and its spectral norm is bounded by $1/K$.
\end{lemma}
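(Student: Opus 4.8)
The plan is to identify $V_K(\xi_\ell)$ as $1/K$ times an orthogonal projection of rank $K$; both norm estimates then follow at once.

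First I would pass to the \emph{modulated tapers}
$$u^{(k)}_n := e^{-2\pi i\langle\xi_\ell, n\rangle}\,\nu^{(k)}_n(\Omega,W), \qquad n \in \Omega,\ k=0,\dots,K-1.$$
Since the Slepian tapers are real-valued (being eigenvectors of a real symmetric matrix), the entries in \eqref{eq_V} become $\big(V_K(\xi_\ell)\big)_{n,m} = \tfrac{1}{K}\sum_{k=0}^{K-1} u^{(k)}_n \overline{u^{(k)}_m}$, that is, $V_K(\xi_\ell) = \tfrac{1}{K}\sum_{k=0}^{K-1} u^{(k)}(u^{(k)})^{*}$. The real-valuedness is the one point to watch: it guarantees that the conjugation falls only on the unimodular modulation factor, so that $V_K(\xi_\ell)$ is a genuine sum of rank-one projections rather than a more general Gram-type matrix.

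The key observation I would record is that $\{u^{(k)}\}_{k=0}^{K-1}$ is orthonormal in $\C^{\Omega}$. Conditions (i) and (iii) in the spectral concentration problem make $\{\nu^{(k)}\}_k$ orthonormal, and pointwise modulation by the unimodular factor $e^{-2\pi i\langle\xi_\ell, n\rangle}$ is a diagonal unitary, hence preserves inner products: $\langle u^{(k)}, u^{(j)}\rangle = \langle \nu^{(k)}, \nu^{(j)}\rangle = \delta_{k,j}$.

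Granting this, both assertions reduce to short computations. For the Frobenius norm I would expand $\|V_K(\xi_\ell)\|_F^2 = \sum_{n,m}\big|\big(V_K(\xi_\ell)\big)_{n,m}\big|^2$ and carry out the sums over $n$ and $m$ separately; each produces a factor $\langle u^{(k)}, u^{(k')}\rangle = \delta_{k,k'}$, leaving $\tfrac{1}{K^2}\sum_{k,k'}|\langle u^{(k)},u^{(k')}\rangle|^2 = \tfrac{1}{K^2}\cdot K$, whence $\|V_K(\xi_\ell)\|_F = 1/\sqrt{K}$. For the spectral norm I would use that $V_K(\xi_\ell)$ is positive semidefinite with $\langle V_K(\xi_\ell)x, x\rangle = \tfrac{1}{K}\sum_{k} |\langle x, u^{(k)}\rangle|^2 \le \tfrac{1}{K}\|x\|_2^2$ by Bessel's inequality, so that $\|V_K(\xi_\ell)\| \le 1/K$. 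There is no serious obstacle beyond correctly establishing the orthonormality in the preceding paragraph; once that is in hand the rest is routine.
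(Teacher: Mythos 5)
Your proposal is correct and follows essentially the same route as the paper: the Frobenius norm is computed by expanding the double sum and using orthonormality of the tapers, and the spectral norm bound comes from the positive semidefinite quadratic-form representation together with Bessel's inequality (the paper phrases this by extending the sum to all $N_\Omega$ tapers and using Parseval, which is the same estimate). Your observation that $V_K(\xi_\ell)$ is $1/K$ times a rank-$K$ orthogonal projection is a clean way to package both facts, and your remark on the real-valuedness of the Slepian tapers correctly identifies the point that makes the rank-one decomposition valid.
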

\begin{proof}
Direct calculations yield
\begin{align*}
\|V_K(\xi_\ell)\|_{F}^2&=\sum_{n \in \Omega}\sum_{m \in \Omega}\left|e^{-2\pi i\langle\xi_\ell,n-m\rangle}\frac{1}{K}\sum_{k=0}^{K-1}\nu^{(k)}_{n}(\Omega,W) \nu^{(k)}_{m}(\Omega,W)\right|^2\\
&=\frac{1}{K^2}\sum_{n\in\Omega} \sum_{m\in\Omega} \sum_{k=0}^{K-1}\sum_{s=0}^{K-1}\nu^{(k)}_n(\Omega,W) \nu^{(k)}_m(\Omega,W)\nu^{(s)}_n(\Omega,W) \nu^{(s)}_m(\Omega,W)\\
&=\frac{1}{K^2}\sum_{k=0}^{K-1}\sum_{s=0}^{K-1}\left(\sum_{n\in \Omega} \nu^{(k)}_n(\Omega,W)\nu^{(s)}_n(\Omega,W)\right)\left(\sum_{m\in\Omega}\nu^{(k)}_m(\Omega,W)\nu^{(s)}_m(\Omega,W)\right)\\
&=\frac{1}{K^2}\sum_{k=0}^{K-1}\sum_{s=0}^{K-1}\delta_{s,k}=\frac{1}{K}.
\end{align*}
To estimate the spectral norm, we first note that $V_K(\xi_\ell)$ is a positive semi-definite matrix. Therefore, using the notation $(M_\xi a)_n=e^{2\pi i \langle\xi, n\rangle }a_n,\ n\in\Omega,$
\begin{align*}
\|V_K(\xi_\ell)\|_s&=\sup_{\|Y\|_2=1}\langle V_K(\xi_\ell)Y,Y\rangle=\sup_{\|Y\|_2=1}\frac{1}{K}\sum_{k=0}^{K-1}|\langle Y,M_{\xi_\ell} \nu^{(k)}(\Omega,W)\rangle |^2
\\
&\leq \sup_{\|Y\|_2=1}\frac{1}{K}\sum_{k=0}^{N_\Omega-1}|\langle M_{-\xi_\ell}  Y,\nu^{(k)}(\Omega,W)\rangle |^2
=\sup_{\|Y\|_2=1}\frac{1}{K} \| M_{-\xi_\ell}  Y\|_2^2=\frac{1}{K},
\end{align*}
as the Slepian tapers form an orthonormal basis.
\end{proof}
We can now prove the main result.
\begin{proof}[Proof of Theorem \ref{thm:main}]\mbox{}
	
\noindent {\bf Step 1}. \emph{Bias estimates}. The following estimate was shown in  \cite[Proof of Theorem 4.3]{anro20},
\begin{align}\label{eq_bias}
\begin{aligned}
|\text{Bias}(\widehat{S}^{\mathrm{mt}})(\xi)|&=|\E\{\widehat{S}^{\mathrm{mt}}\}(\xi)-S(\xi)|\\ &\lesssim  \|S\|_{C^2 }\left(\left(\frac{K}{N_\Omega}\right)^{2/d}+\frac{N_{\partial \Omega} }{N_\Omega^{1-1/d}K^{1/d}}\left[1+\log\left(\frac{N_\Omega}{N_{\partial\Omega}}\right)\right]\right).
\end{aligned}
\end{align}
(Here it is essential that $K = N_\Omega W^d + O(1)$.)

\noindent {\bf Step 2}. \emph{Concentration of the estimator}. We show that
\begin{align}\label{eq_var}
\E\big\{\|\widehat{S}^{\mathrm{mt}}- \E\{\widehat{S}^{\mathrm{mt}}\big\}\|_\infty^2\}\lesssim  \|S\|^2_\infty\cdot\left( \frac{\log(\text{diam}(\Omega))}{K}+\left(\frac{ \log(\text{diam}(\Omega))}{K}\right)^2\right) .
\end{align}
Recall that $\omega=\lceil\text{diam}(\Omega)\rceil$. 
As $N_\Omega \geq 3$,
\begin{align}\label{eq_d}
\omega \asymp \text{diam}(\Omega) \geq c_d>1,
\end{align}
for a dimensional constant $c_d$. 

For $\ell\in \{1,\ldots,4\omega\}^d$, consider the matrices $V_K(\xi_\ell)$ defined in \eqref{eq_V} and the random vectors
$Z_K^\ell$, defined in \eqref{eq_dd}. Let $\Sigma_{|\Omega}$ be the covariance of $X_{|\Omega}$. Then
\begin{align}\label{eq_S}
\|\Sigma_{|\Omega}\|_s \leq \| \Sigma \|_s= \|S\|_\infty.
\end{align}
We apply Theorem~\ref{thm:hansonwright} with 
$Y=X_{|\Omega}$ and $A=V_K(\xi_\ell)$, to obtain a tail bound on each $Z_K^\ell$ and use Lemma~\ref{lem:HS} to conclude
\begin{align}\label{eq:hw-applied} 
\P(|Z_K^\ell|\geq t)&\leq   2\cdot \text{exp}\left[-\frac{1}{C}\min\left(\frac{K t^2}{\|\Sigma\|^2_s },\frac{K t}{\|\Sigma\|_s }\right)\right].
\end{align}
Using \eqref{eq:hw-applied}, the trivial bound $\P(A)\leq 1 $, and the union bound  leads to
\begin{align*}\label{eq:tail-sg1}
\P\left(\max_{\ell\in\{1,...,4\omega\}^d} |Z_K^\ell|\geq t\right)&\lesssim  \min\left(1,  \omega^d\cdot \text{exp}\left[-\frac{1}{C}\min\left(\frac{K t^2}{\|\Sigma\|^2_s },\frac{K t}{\|\Sigma\|_s }\right)\right]\right).
\end{align*}
We now invoke the   tail bound above together with the sampling bound \eqref{eq:trig-est} to estimate the variance term. In doing so, we have to distinguish the cases (i) $Cd\log\omega\leq K$ and (ii) $Cd\log\omega> K$. In the former case,
\allowdisplaybreaks[3]
\begin{align*}
\E\big\{\| \widehat{S}^{\mathrm{mt}}- \E\{\widehat{S}^{\mathrm{mt}}\big\}\|_\infty^2\}&\lesssim \ \E\left\{\max_{ \ell\in\{1,...,4\omega\}^d}|\widehat{S}^{\mathrm{mt}}(\xi_\ell)- \E\{\widehat{S}^{\mathrm{mt}}\}(\xi_\ell)|^2\right\}\nonumber 
\\
&=  \E\left\{\max_{\ell\in\{1,...,4\omega\}^d}|Z_K^\ell|^2\right\}\nonumber \\ &=2\int_0^\infty t\cdot \P\left(\max_{\ell\in\{1,...,4\omega\}^d}|Z_K^\ell|>t\right)dt \nonumber 
\\
&\lesssim \int_0^{\|\Sigma\|_s\sqrt{\frac{ C{d\log\omega}}{ {K}}}}t\ dt+\omega^d\int_{\|\Sigma\|_s\sqrt{\frac{ C{d\log\omega}}{ {K}}}}^{\|\Sigma\|_s}  t\cdot \text{exp}\left[-\frac{1}{C} \frac{{K} t^2}{\|\Sigma\|^2_s }\right]dt
\\
&\qquad\qquad\qquad\qquad\qquad\quad +\omega^d\int_{\|\Sigma\|_s}^\infty  t\cdot \text{exp}\left[-\frac{1}{C} \frac{{K} t}{\|\Sigma\|_s }\right]dt
\\
&\leq\frac{C  \|\Sigma\|^2_s d \log\omega}{2K}+\frac{C\|\Sigma\|^2_s\omega^d}{2K}\int_{d\log\omega}^\infty   e^{-t}dt+ \frac{(C\|\Sigma\|_s)^2\omega^d}{K^2} \int_{\frac{K}{C}}^\infty t\cdot e^{-t}dt
\\
&=\frac{C  \|\Sigma\|_s^2 d \log\omega}{2K}+\frac{C\|\Sigma\|^2_s}{2K}+\frac{(C\|\Sigma\|_s)^2\omega^d}{K^2}\cdot\Gamma\Big(2,\frac{K}{C}\Big),
\end{align*}
where $\Gamma(s,x)$ denotes the upper incomplete gamma function. As $\Gamma(s+1,x)=s\Gamma(s,x)+x^se^{-x}$, we find
\begin{equation}\label{eq:log_l_K}
\E\big\{\| \widehat{S}^{\mathrm{mt}}- \E\{\widehat{S}^{\mathrm{mt}}\big\}\|_\infty^2\}\lesssim
\frac{ \|\Sigma\|^2_s}{K}\left(\log\omega +1+ {  \omega^d} e^{-K/C}\right)\lesssim\frac{ \|\Sigma\|^2_s\log\omega}{K},
\end{equation}
where the last inequality follows from the assumption $Cd\log\omega\leq K$.

If $Cd\log \omega>K$, then, following the same line of arguments, we obtain
\begin{align} \label{eq:log_g_K}
\E\big\{\| \widehat{S}^{\mathrm{mt}}- \E\{\widehat{S}^{\mathrm{mt}}\big\}\|_\infty^2\}&
\lesssim \int_0^{ \frac{ C\|\Sigma\|_s{d\log\omega}}{ {K}}}t\ dt+\omega^d\int_{ \frac{ C\|\Sigma\|_s{d\log\omega}}{ {K}}}^{\infty}  t\cdot \text{exp}\left[-\frac{1}{C} \frac{{K} t}{\|\Sigma\|_s }\right]dt \nonumber
\\
&=\frac{1}{2}\left(\frac{C  \|\Sigma\|_s\log\omega}{K}\right)^2 + \frac{(C\|\Sigma\|_s)^2\omega^d}{K^2} \int_{d\log\omega}^\infty t\cdot e^{-t}dt
\nonumber
\\
&=\frac{1}{2}\left(\frac{C  \|\Sigma\|_s\log\omega}{K}\right)^2+\frac{(C\|\Sigma\|_s)^2\omega^d}{K^2}\cdot\Gamma\Big(2,d\log\omega\Big)
\nonumber 
\\
&\lesssim \left(\frac{\|\Sigma\|_s\log\omega}{K}\right)^2.
\end{align}
The estimates \eqref{eq:log_l_K} and \eqref{eq:log_g_K}, together with \eqref{eq_S}, yield \eqref{eq_var}.

\noindent {\bf Step 3}. We decompose the mean squared error as
\begin{align*}
\E\left\{\norma{\infty}{S - \widehat{S}^{\mathrm{mt}}}^2\right\}&\lesssim \E\big\{\|\widehat{S}^{\mathrm{mt}}-\E\{\widehat{S}^{\mathrm{mt}}\}\|_\infty^2\big\}+\|\E\{ \widehat{S}^{\mathrm{mt}}\}-S\|_\infty^2
\end{align*}
and combine \eqref{eq_bias} and \eqref{eq_var} to obtain \eqref{eq_mse}. 
\end{proof}

\begin{proof}[Proof of Corollary \ref{cor:main}]\mbox{}
For every set $\Omega\subset \Z^d$ one has $ N_\Omega\leq (2\cdot\text{diam}(\Omega)+1)^d\lesssim \text{diam}(\Omega)^d$, where the second inequality uses that $\text{diam}(\Omega)>1$ (as $N_\Omega\geq 3$).  It then follows from \eqref{eq_mse} and \eqref{eq_cc} that 
$$
\E\left\{\norma{\infty}{S - \widehat{S}^{\mathrm{mt}}}^2\right\} \lesssim  \frac{\log (\text{diam}(\Omega))}{K}+\frac{(\log (\text{diam}(\Omega)))^2}{K^{\frac{2}{d}}}+\left(\frac{K}{N_\Omega}\right)^{\frac{4}{d}}.
$$
If $d=1$, we assume for the moment that $\log(\text{diam}(\Omega))\leq K$ which implies
$$
\E\left\{\norma{\infty}{S - \widehat{S}^{\mathrm{mt}}}^2\right\} \lesssim   \frac{\log (\text{diam}(\Omega))}{K} +\left(\frac{K}{N_\Omega}\right)^{4}.
$$
Optimizing the right hand side with respect to $K$ yields the possibly non-integer value $K^\ast=\left(\frac{1}{4}\log(\text{diam}(\Omega)) N_\Omega^{4}\right)^{1/5}$. If we  choose $K$ slightly larger than the optimum, namely $K=\left\lceil \left(\log(\text{diam}(\Omega)) N_\Omega^{4}\right)^{1/5}\right\rceil$, then 
$$
\frac{\log(\text{diam}(\Omega))}{K}\leq\left(\frac{\log(\text{diam}(\Omega))}{N_\Omega}\right)^{4/5}\leq 1,
$$
 by the assumption \eqref{eq:cond-diam}. A direct computation yields \eqref{eq_aa}.

Note that there are at most $N_\Omega$ orthogonal tapers. It is therefore necessary to ensure that our choice of $K$ satisfies $K\leq N_\Omega$ which is also done by \eqref{eq:cond-diam}.

For $d\geq 2$, one has $\log(\text{diam}(\Omega))/K\leq \log(\text{diam}(\Omega))^2/K^{2/d}$ showing that 
$$
\E\left\{\norma{\infty}{S - \widehat{S}^{\mathrm{mt}}}^2\right\} \lesssim   \frac{(\log \text{diam}(\Omega))^2}{K^{\frac{2}{d}}}+\left(\frac{K}{N_\Omega}\right)^{\frac{4}{d}},
$$
and the previous optimization steps can be repeated to finish the proof.
\end{proof}

\subsection{Proof of lower bounds} 
The \emph{Kullback-Leibler divergence} of two 
probability distributions $\mathbb{P},\mathbb{Q}$ is defined by
$$
K(\mathbb{P},\mathbb{Q})=\int \log\frac{\text{d}\mathbb{P}}{\text{d}\mathbb{Q}}\text{d}\mathbb{P},
$$
provided that $\mathbb{P}$ is absolutely continuous with respect to $\mathbb{Q}$
(denoted $\mathbb{P} \ll \mathbb{Q}$). For $d$-dimensional normal distributions, the Kullback-Leibler divergence reads
\begin{equation}\label{eq:kull-norm}
K(\mathcal{N}(0,\Sigma_1),\mathcal{N}(0,\Sigma_2))=\frac{1}{2}\left(\text{trace}(\Sigma_2^{-1}\Sigma_2)-d-\log\left(\frac{\text{det}\Sigma_1}{\text{det}\Sigma_2}\right)\right).
\end{equation}
We will need the following version of Fano's lemma which can be found for instance in \cite[Theorem 2.7]{tsy09}.
\begin{prop}\label{thm:fano}
	Let $\mathcal{S}_M=\{S_0,S_1,...,S_M\}$ be a family of bounded functions on $\R^d$, $M\geq 1$, $\{Z_0,...,Z_M\}$ a set of random variables with corresponding probability measures $\{\mathbb{P}_0,...,\mathbb{P}_M\}$, and $d:\mathcal{S}_M\times \mathcal{S}_M\rightarrow [0,\infty)$ a metric. If  
	\begin{enumerate}
		\item[(i)] $d(S_i,S_j)\geq 2\delta>0$,\quad $0\leq i<j\leq M$,
		\item[(ii)] $\mathbb{P}_j\ll \mathbb{P}_0$,\quad $j=1,...,M$, and for some  $0<\alpha<1/8$
		\begin{equation}\label{eq:tsyba-kull-sum}
		\sum_{j=1}^M K(\mathbb{P}_j,\mathbb{P}_0)\leq \alpha M \log M,
		\end{equation}
	\end{enumerate}
	then we have
	\begin{equation}\label{eq:tsyba-concl}
	\inf_{\widehat{S}}\sup_{S\in\mathcal{S}_M}\E\left\{d(\widehat{S},S)^2\right\}\geq C \delta^2,
	\end{equation}
	where the infimum is taken over all estimators based on an observation of $Z_0,...,Z_M$ and
	$C$ only depends on $\alpha$.
\end{prop}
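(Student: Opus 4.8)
The plan is to run the classical reduction of estimation to multiple hypothesis testing and then to lower bound the testing error by Fano's inequality; this is the argument behind \cite[Theorem 2.7]{tsy09}. First I would turn an arbitrary estimator $\widehat S$ into the minimum-distance test $\psi^\ast := \arg\min_{0\le j\le M} d(\widehat S, S_j)$, taking values in $\{0,\dots,M\}$ (extending $d$ to the range of $\widehat S$). Hypothesis (i) together with the triangle inequality for $d$ shows that $d(\widehat S, S_j) < \delta$ forces $\psi^\ast = j$, since any competitor $S_k$ with $k\neq j$ satisfies $d(\widehat S, S_k)\ge d(S_j,S_k)-d(\widehat S,S_j) > 2\delta-\delta=\delta$. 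Contrapositively $\{\psi^\ast\neq j\}\subseteq\{d(\widehat S,S_j)\ge\delta\}$ under $\P_j$, so by Markov's inequality $\E_j\{d(\widehat S,S_j)^2\}\ge\delta^2\,\P_j(d(\widehat S,S_j)\ge\delta)\ge\delta^2\,\P_j(\psi^\ast\neq j)$. Taking the supremum over $j$ and then the infimum over estimators gives
\[
\inf_{\widehat S}\sup_{S\in\mathcal S_M}\E\{d(\widehat S,S)^2\}\ \ge\ \delta^2\,\inf_{\psi}\max_{0\le j\le M}\P_j(\psi\neq j)=:\delta^2\,p^\ast_{e,M},
\]
so it suffices to prove that $p^\ast_{e,M}$ is bounded below by a positive constant depending only on $\alpha$.

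To bound $p^\ast_{e,M}$ I would put the uniform prior on the index set: let $J$ be uniform on $\{0,\dots,M\}$ with $Z\mid\{J=j\}\sim\P_j$, and pass from worst case to average, $\max_j\P_j(\psi\neq j)\ge\frac{1}{M+1}\sum_{j=0}^M\P_j(\psi\neq j)$. Fano's inequality, applied with $H(J)=\log(M+1)$ and binary-entropy bound $\le\log 2$, yields for every test $\psi$
\[
\frac{1}{M+1}\sum_{j=0}^M\P_j(\psi\neq j)\ \ge\ \frac{\log(M+1)-\log 2 - I(J;Z)}{\log M},
\]
where $I(J;Z)$ is the mutual information between the index and the observation. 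The right-hand side is independent of $\psi$, hence lower bounds $p^\ast_{e,M}$ as well.

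The remaining step is to control $I(J;Z)$ by the Kullback sum in (ii). Writing $\bar\P=\frac{1}{M+1}\sum_{j=0}^M\P_j$ one has $I(J;Z)=\frac{1}{M+1}\sum_{j=0}^M K(\P_j,\bar\P)$, and the barycenter identity $\sum_j\pi_j K(\P_j,Q)=\sum_j\pi_j K(\P_j,\bar\P)+K(\bar\P,Q)$, valid for any $Q$ and weights $\pi_j$, shows that $\bar\P$ minimises $Q\mapsto\sum_j K(\P_j,Q)$. Choosing $Q=\P_0$ gives $I(J;Z)\le\frac{1}{M+1}\sum_{j=1}^M K(\P_j,\P_0)\le\alpha\log M$ by \eqref{eq:tsyba-kull-sum}. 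Substituting into the Fano bound produces $p^\ast_{e,M}\ge\frac{\log((M+1)/2)}{\log M}-\alpha$, whose infimum over $M\ge2$ is attained at $M=2$ (the ratio being increasing in $M$); since $0<\alpha<1/8$ this stays strictly positive, yielding \eqref{eq:tsyba-concl} with $C=C(\alpha)$.

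The main obstacle is the information-theoretic core: establishing Fano's inequality with constants sharp enough that the bound remains uniformly positive across all admissible $M$, and correctly converting the mutual information into the pairwise Kullback sum of (ii) through the barycenter identity above. A secondary point requiring care is the small-$M$ regime, where the Fano estimate degrades; the boundary case $M=1$ should instead be settled by Le Cam's two-point method, which delivers the same conclusion directly from (i)--(ii).
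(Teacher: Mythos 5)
Your proposal is correct, but note that the paper itself offers no proof of this proposition: it is quoted from \cite[Theorem 2.7]{tsy09}, so the relevant comparison is with Tsybakov's proof of that theorem. Tsybakov does not argue via the entropy form of Fano's inequality; after the same reduction-to-testing step (your minimum-distance test and Markov argument are exactly his Section~2.2), he lower bounds the minimax testing error through a likelihood-ratio/second-moment argument (his Proposition~2.3 and Lemma~2.10), obtaining the sharper bound $p^\ast_{e,M}\geq \frac{\sqrt{M}}{1+\sqrt{M}}\bigl(1-2\alpha-\sqrt{2\alpha/\log M}\bigr)$ for $M\geq 2$, which is where the threshold $\alpha<1/8$ originates. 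Your route --- uniform prior, classical Fano with $H(J|\psi(Z))\leq \log 2+\bar p_e\log M$, and the barycenter (compensation) identity to convert $I(J;Z)$ into the pairwise Kullback sum --- is a genuinely different and more elementary information-theoretic core; it buys self-containedness at the cost of cruder constants, which is immaterial here since the proposition only asserts $C=C(\alpha)>0$. Two small repairs make your argument airtight. First, the monotonicity of $M\mapsto \log((M+1)/2)/\log M$, which you assert but do not verify, is unnecessary: $(M+1)/2\geq\sqrt{M}$ for all $M\geq 1$ (equivalent to $(\sqrt{M}-1)^2\geq 0$) gives $\log((M+1)/2)/\log M\geq 1/2$, hence $p^\ast_{e,M}\geq 1/2-\alpha>3/8$ uniformly in $M\geq 2$. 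Second, your deferral of $M=1$ to ``Le Cam's two-point method'' can be closed in one line, and closing it matters because the paper states the proposition for $M\geq 1$ while Tsybakov assumes $M\geq 2$: for $M=1$, hypothesis (ii) reads $K(\mathbb{P}_1,\mathbb{P}_0)\leq \alpha\cdot 1\cdot\log 1=0$, forcing $\mathbb{P}_1=\mathbb{P}_0$, and then $\P_0(\psi\neq 0)+\P_1(\psi\neq 1)=\P_0(\psi\neq 0)+\P_0(\psi\neq 1)\geq 1$ shows $\max_j \P_j(\psi\neq j)\geq 1/2$ for any test, so the conclusion holds with $C=1/2$. (You also implicitly use the data-processing inequality $I(J;\psi(Z))\leq I(J;Z)$ when applying Fano to the test rather than to the raw observation; worth stating, but standard.)
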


Let $\mathcal{S}$ be the class of all $C^2$ functions on the line, with $\|S\|_{C^2}\leq 1$, that are the spectral densities of zero mean, stationary, stochastic Gaussian processes on $\Z^d$. Each function in $\mathcal{S}$ is in fact the spectral density of a \emph{unique} stationary, real, zero mean, Gaussian process (which is necessarily ergodic).

 The partial Fourier sum of a function $S$ is denoted by 
$$
F_p(S)(\xi)=\sum_{\|k\|_\infty\leq p} \mathcal{F}(S)(k) e^{2\pi i \langle \xi,k\rangle}.
$$
The following lemma provides a class of functions that allows us to apply Fano's lemma.

\begin{lemma}
\label{lemma_fano_class}
There exists $\varepsilon>0$ such that for all $M \in \mathbb{N}$ and $\tau \in (0,\varepsilon)$ a family of functions $
\mathcal{S}_M=\{S_0,S_1,...,S_M\} \subseteq \mathcal{S}$ with the following properties exists:
\begin{enumerate}
	\item[(i)] $S_0 \equiv 1/2$.
	\item[(ii)] $\|S_n-S_0\|_\infty \asymp \|S_n-S_m\|_\infty \asymp   \frac{\tau}{M^{2/d}}, \quad 1 \leq n\not=m \leq M$,
	\item[(iii)]  $\|S_n-S_0\|_2^2 \asymp    \frac{\tau^2}{M^{1+4/d}}, \quad 1 \leq n \leq M$,
	\item[(iv)] $\|F_p(S_n)-S_0\|_\infty \leq 1/4, \quad 1 \leq n \leq M, \quad p \geq 0,$
	\item[(v)]  $F_p(S_n)\geq 0,\quad 1 \leq n \leq M, \quad p \geq 0.$
\end{enumerate}
(The implicit constants and $\varepsilon$ only depend on the ambient dimension $d$.)
\end{lemma}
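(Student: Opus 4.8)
The plan is to build the whole family as perturbations of the constant baseline $S_0\equiv 1/2$ (which gives (i)) by small, smooth, non-negative bumps, using a \emph{single} bump-pair per index so that no Varshamov--Gilbert selection is needed. Fix once and for all an even profile $g\in C_c^\infty(\R^d)$, supported in the unit cube with $\norm{\infty}{g}=1$, and put $A:=\tau M^{-2/d}$ and $h\asymp M^{-1/d}$. Tile $\T^d$ by cubes of side $\asymp h$; after discarding the $O(1)$ cubes meeting the fixed-point set of the involution $\xi\mapsto-\xi$, the remaining cubes group into disjoint mirror pairs, and—choosing $h$ small enough that at least $M$ pairs fit—we label the centers of $M$ of them as $\pm c_1,\dots,\pm c_M$. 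Define
\[
S_n(\xi)=\tfrac12+A\big(g((\xi-c_n)/h)+g((\xi+c_n)/h)\big),\qquad 1\le n\le M.
\]
Since $g$ is even, each $S_n$ is even and real; since $g\ge 0$ we have $S_n\ge 1/2>0$, so by the Herglotz--Bochner theorem $S_n$ is the spectral density of a (unique) stationary real Gaussian process, placing $S_n\in\S$ once the $C^2$ bound is verified.

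Properties (ii) and (iii) are then scaling bookkeeping. As the $2M$ bumps have pairwise disjoint supports, at most one is nonzero at any point, so for $n\neq m$ one has $\norm{\infty}{S_n-S_m}=\norm{\infty}{S_n-S_0}=A=\tau M^{-2/d}$, which is (ii); and $\norm{2}{S_n-S_0}^2=2A^2h^d\norm{2}{g}^2\asymp\tau^2M^{-1-4/d}$, which is (iii). For $\norm{C^2}{S_n}\le1$ the binding term is any second-order partial derivative, of size $\asymp Ah^{-2}\asymp\tau$, while the function itself is $\le 1/2+A$ and the first derivatives are $\asymp\tau M^{-1/d}$; all three are $\le 1$ provided $\tau<\varepsilon$ for a small dimensional $\varepsilon$.

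The delicate requirements are (iv) and (v), which must hold for \emph{every} truncation level $p$. Writing $\psi_n:=S_n-S_0$ and using that $S_0$ is constant (so $F_p(S_0)=S_0$), both reduce to the single uniform estimate $\sup_{p\ge 0}\norm{\infty}{F_p(\psi_n)}\le 1/4$: this is exactly (iv), and it yields (v) via $F_p(S_n)\ge 1/2-1/4>0$. Since each Dirichlet partial sum is dominated in sup-norm by the full Fourier series, it suffices to bound $\sum_{k\in\Z^d}|\F(\psi_n)(k)|$. Here I would use the smoothness of the fixed profile: $\F(g)$ is Schwartz, a direct computation gives $\F(\psi_n)(k)=2Ah^d\F(g)(hk)\cos(2\pi\langle k,c_n\rangle)$, and comparing $\sum_k|\F(g)(hk)|$ with the Riemann integral $h^{-d}\norm{1}{\F(g)}$ yields $\sum_k|\F(\psi_n)(k)|\lesssim A\norm{1}{\F(g)}\lesssim\tau$; shrinking $\varepsilon$ makes this $\le1/4$ for all $\tau<\varepsilon$. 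This last point is the main obstacle: a generic bump exhibits Gibbs-type overshoot in its partial sums, and it is the combination of the profile's smoothness (which forces $\F(g)\in L^1$) with the smallness of the amplitude $A$ that controls all partial sums uniformly in $p$; the only other care needed is the geometric one of fitting the $M$ mirror pairs disjointly into $\T^d$ so that evenness and the separation (ii) hold at once.
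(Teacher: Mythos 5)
Your construction is essentially the paper's: small smooth bumps of amplitude $\asymp\tau M^{-2/d}$ and width $\asymp M^{-1/d}$ with pairwise disjoint supports, symmetrized so that each $S_n$ is even, with (ii), (iii) and the $C^2$ bound following from the same scaling bookkeeping, so the core of the argument matches. Two points differ and are worth recording. First, for (iv)--(v) the paper bounds $\sup_p\norm{\infty}{F_p(S_n)-S_0}$ by citing a theorem on uniform boundedness of partial Fourier sums of $C^2$ functions in terms of the $C^2$ norm, whereas you give a self-contained argument: $\norm{\infty}{F_p(S_n-S_0)}\leq\sum_{k}|\F(S_n-S_0)(k)|\lesssim A\,h^d\sum_k|\widehat{g}(hk)|\lesssim\tau$ by a Riemann-sum comparison using that $\widehat{g}$ is Schwartz. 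This is correct and arguably more elementary; both routes then get (v) from (iv) because $F_p$ of a real function over the symmetric index set $\{\|k\|_\infty\leq p\}$ is automatically real, hence $F_p(S_n)\geq 1/2-1/4>0$. Second, and this is the one caveat: the paper symmetrizes each bump over all $2^d$ sign patterns $e\in\{-1,1\}^d$, which yields the stronger property $\F(S_n)(m)=\F(S_n)(|m_1|,\ldots,|m_d|)$, while your two-point mirror pairs $\pm c_n$ give only central symmetry. For $d\geq 2$ this is weaker; it still suffices for the lemma exactly as stated (your $S_n$ are real, even, nonnegative, $C^2$-bounded, so they lie in $\mathcal{S}$ and satisfy (i)--(v)), but the componentwise symmetry is used silently later, in the proof of the lower bound, to identify the eigenvalues of the folded circulant covariance with $F_\omega(S_n)(\xi_k)$. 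If you intend your family to feed into that argument, you should symmetrize over all $2^d$ reflections (at the cost of a constant in the counting of how many disjoint orbits fit in $\T^d$).
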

\begin{proof}
\noindent {\bf Step 1}.
Let us define  $A\in C^\infty(\R^d)$  by
$$
A(x)=\text{exp}\left(-\frac{1}{1-4\|x\|^2_2}\right)\chi_{\{\|x\|_2< 1/2\}}(x),\quad x\in\R^d.
$$
Set $K:=\lceil M^{1/d}\rceil+1$  and, for $n\in\{1,...,K-1\}^d$,
\begin{align}\label{eq_b}
S_n(\xi):=S_0+\frac{\tau}{K^2} \sum_{e\in\{-1,1\}^d}  A\big(2K\xi-\varphi(n,e)\big), \quad \xi\in [-1/2,1/2]^d,
\end{align}
where $\varphi:\R^d\times\R^d\rightarrow \R^d$ is defined by
$$
\varphi(x,y)_j=\left\{\begin{array}{ll}\ \  x_j, & \text{if}\ y_j\geq 0 \\ -x_j, & \text{if}\ y_j<0 \end{array}\right. .
$$
Now  pick $M$ functions out of $\{S_n\}_{n\in\{1,...,K-1\}^d}$, define (after relabelling the indices) 
$\mathcal{S}_M:=\{S_0,S_1,...,S_M\}$, choose $\varepsilon$ such that
$
\|S_n\|_{C^2 }\leq 1/2+4\varepsilon\|A\|_{C^2 }\leq 1,
$
and let $\tau \in (0,\varepsilon)$.

\smallskip

\noindent {\bf Step 2}.
Let us show that the Fourier coefficients of $S_n$ only depend on the componentwise magnitude of their index:
\begin{align}\label{eq_aaa}
\F(S_n)(m)=\F(S_n)\left((|m_1|, \ldots, |m_d|)\right), \qquad m \in \mathbb{Z}^d.
\end{align}
We first note that $\varphi$ is linear in the first argument and that $\varphi(\varphi(x,y),y)=x$. Moreover, if $y_j\neq 0,$ for every $j=1,...,d$, then $\varphi(\varphi(x,y),z)=\varphi(x,\varphi(y,z))$. These properties together with symmetry of $A$ w.r.t. the origin imply that 
\begin{align*}
A\big(2K\varphi(\xi,m) -\varphi(n,e)\big)&=A\Big(2K \varphi(\xi,m) -\varphi\big(\varphi(\varphi(n,e),m),m\big) \Big)
\\&=A\Big(\varphi\big(2K\xi -\varphi(\varphi(n,e),m),m\big)\Big)
\\&=A\big( 2K\xi -\varphi(\varphi(n,e),m) \big) 
\\ &=A\big(2K\xi -\varphi(n,\widetilde e)\big),
\end{align*}
for $\widetilde e:=\varphi(e,m)\in \{-1,1\}^d$. Since $\varphi(\cdot,y):\{-1,1\}^d\rightarrow \{-1,1\}^d$ is bijective for every $y\in\R^d$, we conclude that
$S_n(\varphi(\xi,m))=S_n(\xi)$. Hence, writing $|m|=(|m_1|, \ldots, |m_d|)$,
\begin{align*}
\F(S_n)(m)&=\int_{\big[-\frac{1}{2},\frac{1}{2}\big]^d}S_n(\xi)e^{2\pi i\langle \xi,m\rangle} d\xi=\int_{\big[-\frac{1}{2},\frac{1}{2}\big]^d}S_n(\xi)e^{2\pi i\langle \varphi(\xi,m),|m|\rangle} d\xi
\\
&=\int_{\big[-\frac{1}{2},\frac{1}{2}\big]^d}S_n(\varphi(\xi,m))e^{2\pi i\langle \xi,|m|\rangle} d\xi=\F(S_n)(|m|).
\end{align*}

\smallskip

\noindent {\bf Step 3}. Note that
for $e\in\{-1,1\}^d$ and $n\in\{1,\ldots,K-1\}^d$,
$$
\text{supp}\big[ A\big(2K\xi-\varphi(n,e)\big) \big] \subseteq B_{1/4K}\big( \varphi(n,e)/2K\big).
$$
As  $B_{1/4K}\big( \varphi(n_1,e_1)/2K\big)$ and  $B_{1/4K}\big( \varphi(n_2,e_2)/2K\big)$
are disjoint whenever   $e_1\neq e_2$ or $n_1\neq n_2$, a direct inspection of \eqref{eq_b} yields (ii) and (iii).  

In addition, $\|S_0-S_n \|_{C^2} \leq C\tau \leq C \varepsilon$, and therefore
$\|F_p(S_0-S_n)\|_\infty = \|F_p(S_n)-S_0\|_\infty \leq C \varepsilon$, for a constant $C>0$ --- see, e.g., \cite[Theorem 4.4]{schu69}. Hence, (iv) holds, provided that $\varepsilon$ is chosen small enough. Finally, by \eqref{eq_aaa}, $F_p(S_n)\in\R$ for every $p \geq 0$,
so (iv) implies (v).
\end{proof}
We can now prove the desired lower bounds.

\begin{proof}[Proof of Theorem \ref{thm:lower-minimax}]
Set $\omega:=\lceil\text{diam}(\Omega)\rceil$ and
\begin{align}\label{eq_M}
M:=\left\lceil \left(\frac{\omega}{(\log\omega)^{1/d}}\right)^{d^2/(4+d)}\right\rceil\geq 2.
\end{align}
Since $N_\Omega\geq 3$ we have $\omega \geq 2$ and consequently
\begin{align}\label{eq_M2}
M \asymp \left(\frac{\omega}{(\log\omega)^{1/d}}\right)^{d^2/(4+d)}.
\end{align}
Fix a small parameter $\tau>0$ to be specified, and invoke Lemma \ref{lemma_fano_class} to obtain functions
$\{S_0,S_1, \ldots, S_M\}$.

\smallskip

\noindent {\bf Step 1}. \emph{More informative model with circulant covariance}. Let $n^\ast\in\Z^d$ be such that 
$$
\Omega\subseteq n^\ast+\{0,...,\omega\}^d=:\widetilde{\Omega}.
$$ 
The samples  $\{X_n: n\in \widetilde\Omega\}$ are distributed according to the covariance matrix $\widetilde{\Sigma}=\Sigma_{|\widetilde\Omega}$. Let $Y$ be the $(2\omega+1)^d$-dimensional Gaussian random variable defined by the multidimensional circulant covariance matrix $\Sigma_Y$:
$$
(\Sigma_Y)_{n,m}:= \sigma_{u(n-m)},\quad m,n\in \{-\omega,\ldots,\omega\}^d,
$$
where   
$$
u(n-m)_j=\left\{\begin{array}{ll}|n_j-m_j|, & \text{ when }|n_j-m_j|\leq \omega,\\
2\omega+1-|n_j-m_j|, & \text{ when }\omega+1\leq |n_j-m_j|\leq 2\omega,\end{array}\right. \quad j=1,...,d.
$$
The minimax error of estimating $S$ from samples of $X$ on $\Omega$ is larger than that based on samples on $\widetilde{\Omega}$. This latter error is in turn larger than the one corresponding to the estimation of $S$ based on samples of $Y$, as the random vector $Y_{|\{0,...,\omega\}^d}$ is  distributed exactly as $X_{|\widetilde{\Omega}}$. Thus,
\begin{align}\label{eq_x1}
\inf_{\widehat{S}_{X_{|\Omega}}}\sup_{S \in \mathcal{S}}
\E\left\{\norma{\infty}{S - \widehat{S}_{X|\Omega} }^2\right\}
\geq \inf_{\widehat{S}_Y}\sup_{S \in \mathcal{S}}
\E\left\{\norma{\infty}{S - \widehat{S}_Y }^2\right\},
\end{align}
where $\widehat{S}_{X_{|\Omega}}$ denotes an estimator based on samples on $\Omega$ of a random process $X$ with spectral density $S$ and $\widehat{S}_Y$ denotes an estimator based on samples of $Y$.

\smallskip

\noindent {\bf Step 2}. \emph{Diagonalization}.
Let $S \in \mathcal{S}$ and assume that $F_\omega(S) \geq 0$. As $\Sigma_Y$ is a multidimensional circulant matrix, it is diagonalized by the multidimensional Fourier matrix \[(U)_{n,m}:=(2\omega+1)^{-d/2}e^{2\pi i \frac{\langle n,m\rangle }{2\omega+1}}, \qquad n,m\in\{-\omega,...,\omega\}^d.\]
Explicitly,
\begin{align*}
U \Sigma_Y U^\ast=\text{diag}\left( \left[
F_\omega(S)(\xi_k) \right]_{k \in \{-\omega, \ldots, \omega\}^d}
\right),
\end{align*}
where
\begin{align}\label{eq_xk}
\xi_k:=\frac{k}{2\omega+1}.
\end{align}
Let $Z:= UY$. Since the covariance matrix $\Sigma_Z=U\Sigma_Y U^\ast$ is diagonal, 
$$
Z_k\sim  [F_\omega(S)(\xi_k)]^{1/2}  \eta_k, \quad \|k\|_\infty \leq \omega,
$$ 
where $\eta_k \stackrel{iid}{\sim} \mathcal{N}(0,1)$.
As the experiment of taking a sample from $Y$ contains exactly as much information as taking a sample from $Z$, we get
\begin{align}\label{eq_x2}
\inf_{\widehat{S}_Y}\sup_{S \in \mathcal{S}}
\E\left\{\norma{\infty}{S - \widehat{S}_Y }^2\right\}
= 
\inf_{\widehat{S}_Z}\sup_{S \in \mathcal{S}}
\E\left\{\norma{\infty}{S - \widehat{S}_Z }^2\right\},
\end{align}
where $\widehat{S}_Z$ denotes an estimator based on samples of $Z$. 

\smallskip

\noindent {\bf Step 3}. \emph{Estimation of Kullback-Leibler divergences}.
Recall that $S_0 \equiv 1/2$ while $F_\omega(S_n)(\xi_k)\geq 0$,
for $n= 1,...,M$. We can therefore define the random vectors $Z_n$ by
$$
(Z_{n})_k=\left[F_\omega(S_n)\left(\xi_k\right)\right]^{1/2}\eta_{k}.
$$
Then, with the notation of \eqref{eq_xk},
\begin{displaymath}
{Z}_n\sim \mathcal{N}\Big(0,\text{diag}\big\{F_\omega(S_{n})\big(\xi_{k}\big)\big\}_{k \in \{-\omega,\ldots,\omega\}^d}\Big).
\end{displaymath}
Let $\mathbb{P}_n$ denote the distribution of $  Z_n$.
By \eqref{eq:kull-norm},
\begin{align*}
K&(\mathbb{P}_n,\mathbb{P}_0)\\ &=\frac{1}{2}\hspace{-0.04cm}\left[\text{trace}\hspace{-0.06cm}\left(\hspace{-0.06cm}\text{diag}\hspace{-0.03cm}\left\{\hspace{-0.04cm}\frac{F_\omega(
	S_{n})\left(\xi_{k}\right)}{S_0}\hspace{-0.04cm}\right\}\hspace{-0.06cm}\right)\hspace{-0.06cm}-\hspace{-0.06cm}(2\omega+1)^d-\log\hspace{-0.03cm}\left(\hspace{-0.03cm}\frac{\text{det}\left(\text{diag}\left\{F_\omega(S_{n})\left(\xi_{k}\right)\right\}\right)}{\text{det}\big(\text{diag}(S_0)\big)}\hspace{-0.03cm}\right)\hspace{-0.04cm}\right]
\\
&=\frac{1}{2}\left[\sum_{\|k\|_\infty\leq  \omega}\frac{F_\omega(S_{n})\left(\xi_k\right)}{S_0}-(2\omega+1)^d-\log\left(\prod_{\|k\|_\infty\leq \omega}\frac{F_\omega(S_{n})\left(\xi_{k}\right)}{S_0}\right)\right]
\\
&=\frac{1}{2}\sum_{\|k\|_\infty\leq  \omega}\left[\frac{F_\omega(S_{n})\left(\xi_k\right)}{S_0}-1-\log\left(\frac{F_\omega(S_{n})\left(\xi_k\right)}{S_0}\right)\right].
\end{align*}
For $\tau$ small enough, we have that $|F_\omega(S_{n})\left(\xi_k\right)/S_0-1|\leq 1/2$. 
Since $a-\log(1+a)\leq a^2$, for $|a|\leq 1/2$, it thus follows  that
$$
\frac{F_\omega(S_{n})\left(\xi_k\right)}{S_0}-1-\log\left(\frac{F_\omega(S_{n})\left(\xi_k\right)}{S_0}\right)\leq \frac{1}{S_0^2}\left(F_\omega(S_{n})\left(\xi_k\right)-S_0 \right)^2.
$$
Let $\nu_{n}(m)$ denote the $m$-th Fourier coefficients of $S_n-S_0$. Then, using  Parseval's identity and $S_0\equiv 1/2$, we estimate
\begin{align*}
\sum_{n=1}^M K(\mathbb{P}_n,\mathbb{P}_0)
&=\frac{1}{2}\sum_{n=1}^M\sum_{\|k\|_\infty\leq  \omega}\left[\frac{F_\omega(S_{n})\left(\xi_k\right)}{S_0}-1-\log\left(\frac{F_\omega(S_{n})\left(\xi_k\right)}{S_0}\right)\right]
\\
&\leq \frac{1}{2S_0^2}\sum_{n=1}^M\sum_{\|k\|_\infty\leq  \omega} |F_\omega(S_{n})\left(\xi_k\right)-S_0 |^2
\\
&=2\sum_{n=1}^M\sum_{\|k\|_\infty\leq  \omega}\left|\sum_{\|m\|_\infty\leq \omega}\nu_{n}(m)e^{2\pi i \langle \xi_k,m\rangle}\right|^2
\\
&=2\sum_{n=1}^M\sum_{\|k\|_\infty\leq  \omega} \sum_{\|m\|_\infty\leq \omega} \sum_{\|\ell\|_\infty\leq  \omega}\nu_{n}(m)\overline{\nu_{n}(\ell)}e^{2\pi i \langle \xi_k,m-\ell\rangle}
\\
&=2\sum_{n=1}^M\sum_{\|m\|_\infty\leq  \omega} \sum_{\|\ell\|_\infty\leq  \omega}\nu_{n}(m)\overline{\nu_{n}(\ell)}\underbrace{\sum_{\|k\|_\infty\leq \omega} e^{2\pi i \langle \xi_k,m-\ell\rangle}}_{=(2\omega+1)^d \delta_{m,\ell}}
\\ 
&=2(2\omega+1)^d \sum_{n=1}^M\sum_{\|m\|_\infty\leq  \omega} |\nu_{n}(m)|^2
 \leq 2 (2\omega+1)^d \sum_{n=1}^M \sum_{m\in\Z^d} |\nu_{n}(m)|^2
\\ 
&=2(2\omega+1)^d \sum_{n=1}^M\|S_n -S_0\|^2_2 .
\end{align*}
Invoking Lemma~\ref{lemma_fano_class}~(ii) we conclude
\begin{align}\label{eq_c}
\sum_{n=1}^M K(\mathbb{P}_n,\mathbb{P}_0)
\lesssim \tau^2  M \frac{  \omega^d  }{ M^{1+4/d}}.
\end{align}

\smallskip

\noindent {\bf Step 4}. \emph{Application of Fano's inequality}. 
By our choice of $M$ in \eqref{eq_M},
$$
\frac{  \omega^d  }{ M^{1+4/d}}\leq  \log \omega \lesssim \log M.$$
Combining this with \eqref{eq_c} we conclude that
\begin{align*}
\sum_{n=1}^M K(\mathbb{P}_n,\mathbb{P}_0)
\leq C \tau^2 M \log(M),
\end{align*}
for a constant $C>0$. We choose $\tau$ so that
$C \tau^2 < 1/8$ and, consequently, assumption (ii) in Proposition~\ref{thm:fano} is satisfied.
Since $N_\Omega \geq 3$, $\text{diam}(\Omega) > 1$ and
$\omega \asymp \text{diam}(\Omega)$. Note also that $N_\Omega^{1/d}\leq \text{diam}(\Omega)$. We now invoke Proposition ~\ref{thm:fano}, and combine its conclusion with the previous observations and \eqref{eq_M2},
\eqref{eq_x1} and \eqref{eq_x2} to estimate
\begin{align*}
\inf_{\widehat{S}_{X_{|\Omega}}}\sup_{S\in\mathcal{S}}\E\left\{\|\widehat{S}-S\|_\infty^2\right\}&\gtrsim   M^{-4/d}
\gtrsim \left(\frac{(\log \omega)^{1/d} }{\omega}\right)^{\frac{4d}{4+d}}
\gtrsim \left(\frac{(\log N_\Omega)^{1/d} }{\text{diam}(\Omega)}\right)^{\frac{4d}{4+d}},
\end{align*}
as desired.
\end{proof}

\bibliographystyle{abbrv}
\bibliography{paperbib}
\end{document}